\tikzset{partition/.style={fill,circle,inner sep=1pt}}
\tikzstyle{pnt}=[draw,ellipse,fill,inner sep=1pt]
\tikzstyle{opnt}=[draw,ellipse,inner sep=1pt]
\tikzstyle{pntt}=[draw,ellipse,fill,inner sep=0.5pt]
\tikzstyle{point}=[draw,ellipse,fill,inner sep=2pt]
\newtheorem{proposition}{Proposition}
\newtheorem{definition}{Definition}
\newtheorem{example}{Example}
\title{Constructing Skolem sequences via generating trees}
\author{Sophie Burrill, Lily Yen}
\address[S.~Burrill, L.~Yen]{Department of Mathematics, Simon Fraser University, Burnaby, BC, Canada}
\email{srb7@sfu.ca}
\address[L.~Yen]{Department of Mathematics and Statistics, Capilano University, North Vancouver, BC, Canada} 
\begin{document}

\begin{abstract}
A Skolem sequence is a linear arrangement of the multiset, $\{1, 1, 2, 2, \dots, n, n\}$ such that if $r \in [n]$ appears in positions $i$ and $j$, then $|i-j| = r$. We first translate the problem to a particular set of perfect matchings, then apply the method of generating trees for open arc diagrams to generate exhaustively all Skolem sequences of a given size. Tracking  the arc length between pairs of vertices in an arc annotated diagram is the central task. Although we do not surpass previously known enumerative results, this method drastically reduces the search space compared to previously known methods. 
\end{abstract}
\maketitle

\section{Introduction}
\label{sec:in}

\emph{Arc diagrams}, also known as arc annotated sequences, are structures that encode a variety of combinatorial classes, including matchings \cite{ChDeDuStYa2007}, colored matchings \cite{ChG2011}, set partitions \cite{ChDeDuStYa2007}, permutations \cite{BuMiPo2010}, labelled graphs \cite{deMier2007}, tangled diagrams \cite{ChQiRe2008} and RNA substructures \cite{JiRe2010}. Each arc diagram is a row of increasingly labelled vertices from $1$ to $n$ with some arcs between them, restricted according to the combinatorial class being represented. Much recent work has focused on illustrating the equidistribution of two statistics that arise in arc diagrams: $k$-crossings (a set of $k$ mutually crossing arcs) and $k$-nestings (a set of $k$ mutually nesting arcs)  (\cite{Kr2006}, \cite{ChDeDuStYa2007},  \cite{BuMiPo2010}, \cite{ChQiRe2008}, \cite{deMier2007}). Also of interest has been the enumeration of combinatorial classes parameterized by these statistics (\cite{ChDeDuStYa2007}, \cite{BoXi2007}, \cite{MiYe2011}).  More recently, in \cite{BuElMiYe2011}, a generalized version of an arc diagram was used  for the first time to construct generating trees and functional equations for $k$-nonnesting set partitions and permutations. This generalization was the \emph{open arc diagram}, which allowed for arcs to have left endpoints but no right endpoints, and was introduced for bijective purposes in \cite{KaZe06}.

Here we represent a different combinatorial class,  \emph{Skolem sequences}, with arc diagrams, and utilize open arc diagrams to build a generating tree that exhaustively constructs all Skolem sequences of order $n$ using a strategy similar to that seen in \cite{BuElMiYe2011}. 
 While this direct generation scheme is restricted in practical effectiveness due to memory limitations, we feel this method is valuable for three reasons:
\begin{enumerate}
\item This is a new approach that \emph{recursively} constructs Skolem sequences;
\item The strategy is a systematic approach for various generalizations of Skolem sequences; 
\item This method fits within the larger picture of open arc diagrams being used as a tool for generation and enumeration. 
\end{enumerate}  

\section{Plan of the paper} \label{sec:plan}

The paper will proceed as follows: in Section \ref{sec:Sk} Skolem sequences will be defined and background from the literature will be put into context. In Section \ref{sec:ska} we will depict Skolem sequences as arc diagrams, and introduce a generalization of them called \emph{open Skolem sequences} which will be critical to our generation scheme. In Section \ref{sec:gentree} we present a label for open Skolem sequences that allows us to formalize a succession rule for generating all descendants of an open Skolem sequences. We finish with a discussion in Section \ref{sec:dis} which places memory limitations into context, highlights our contribution to the study of Skolem sequences and potential avenues for other generalizations of Skolem like sequences. Lastly Skolem sequences are viewed as a subclass of perfect matchings and an observation is made regarding their corresponding oscillating tableaux as seen in \cite{ChDeDuStYa2007}.

\section{Skolem sequences} \label{sec:Sk}

Skolem sequences were introduced in 1957 \cite{Sk1957}, when Thoralf Skolem asked about the possibility of partitioning the set $\{1, 2 \ldots, 2n\}$ into $n$ pairs $(a_r, b_r)$ where $b_r-a_r=r$ for $r=1,2,  \ldots, n$.  Such a partition forms what is now referred to as a Skolem sequence. 

\begin{definition}\label{def:skolem}
A \textbf{Skolem sequence} of order $n$ is an integer sequence $w=(s_1, s_2, \ldots, s_{2n})$ of the multiset $\{1,1, 2,2, \ldots, n, n\}$ of size $2n$  satisfying the following conditions: 
\begin{enumerate}
\item For all $k\in\{1,2, \ldots n\}$, there exists exactly two elements $s_i, s_j$ in $w$ such that $s_i=s_j=k$. 
\item If $s_i=s_j=k$, and $i<j$, then $j-i=k$. 
\end{enumerate}
\end{definition}
We let $\mathcal{S}$ denote the set of all Skolem sequences and $\mathcal{S}_n$ to be of order $n$. 

\begin{example} When $n=4$, $w= (3,4,2,3,2,4,1,1) \in \mathcal{S}_4$ is a Skolem sequence. 
\end{example}

Skolem sequences have immediate further applications in design theory, particularly, in \emph{Steiner Triple Systems}.  Let $STS(n)$ denote the set of pairs (V,B) where $|V|=v$ and $B$ consists of $3$-subsets (`blocks') of $V$ such that any 2-subset of $V$ is included in exactly one block. These are the Steiner Triple Systems of size $n$. In 1958, Skolem \cite{Sk1958} showed how to build an element of $STS(6n+1)$ given an element of $\mathcal{S}_n$. Specifically, if $w=(s_1, \ldots s_{2n}) \in \mathcal{S}_n$,  an $STS(6n+1)$ can be constructed with base blocks of the form $(x, x+k,x+j+n )$, where $s_i=s_j=k$, $i<j$ and $x\in \{0,1,2 \ldots, 6n\}$.

\begin{example} Let $n=4$. The Skolem sequence $w=(3,4,2,3,2,4,1,1)\in \mathcal{S}_4$ can be written as: 

\[
   \begin{array}{*{9}{c}}
      i={}& 1& 2& 3&4&5&6&7&8\\
 s_i={}& 3&4&2&3&2&4&1&1\\
   \end{array}
\]
We build base blocks of a $STS(25)$ from this example using $x=0$. Notice that $s_1=s_4=3$, so our first base block is $(0, 0+3,0+ 4+4)=(0,3,8)$. Similarly, $s_2=s_6=4$ gives the base block $(0, 0+4, 0+6+4)=(0,4, 10)$; $s_3=s_5=2$ gives $(0, 0+2, 0+5+4)=(0,2,9)$ and $s_7=s_8=1$ gives $(0, 0+1, 0+8+4)=(0,1,12)$. Thus our base blocks for creating the $STS(25)$ are
\[(0,3, 8), (0, 4,10), (0, 2,9), \mbox{ and } (0,1,12).\]
Adding integers $1,2, \ldots, 24$ to each element of each base block and reducing \emph{mod} $25$ constructs an $STS(25)$. 

\end{example}

Beyond STSs, Skolem sequences can also be applied to the areas of starters \cite{PiSh2000}, balanced tertiary designs \cite{Bi1987}, factorizations of complete graphs \cite{CoRo1990} and labellings of graphs \cite{Ab1991}. For this reason, generation of Skolem sequences of arbitrary order is valuable, although most existing algorithms only construct a single, or at least limited number, of Skolem sequences of order $n$. Despite this, since its introduction, most work has centered around the existence of Skolem and Skolem-type sequences, the generation of individual Skolem sequences of large order, and the strict enumeration of Skolem sequences. 

Skolem proved in 1957 that Skolem sequences only exist when $n\equiv 0,1 \mod 4$ \cite{Sk1957}. 
There are many methods of generalizing Skolem sequences. These include adding a \emph{hook}, or 0, into one or multiple positions in the sequence, allowing $\lambda>2$ copies of an integer, or stipulating that each integer $i\in \{1, \ldots, n\}$ must be present $\lambda_i$ times in the sequence. In \cite{FrMe2009}, Navena Franceti\'{c} and Eric Mendelsohn gave a thorough survey of the known existence results regarding many of these generalizations.

Eldin~\emph{et al.} \cite{ElShAl1998} gave a hill climbing algorithm to generate Skolem sequences of arbitrary order, easily constructing Skolem sequences of order 84 in examples.

The enumeration of  Skolem sequences has proven quite challenging. In 1986, Jarom\'{i}r Abrham determined a lower bound for the number of Skolem sequences using a construction called  additive permutations. He showed that if $n\equiv 0,1 \mod 4$, then $|\mathcal{S}_n| \geq 2^{\lfloor \frac{n}{3} \rfloor}$ \cite{Ab1986}. Other than such bounds, exact enumeration of the number of Skolem sequences has seen most success with inclusion-exclusion algorithms \cite{GoWebsite}, \cite{LaThesis}.

The exact number of Skolem sequences of order $n$ has been computed  for $n\leq 23$ only (OEIS A004075), a limited number that is mainly due to the very large search space. Before the inclusion-exclusion algorithms were introduced \cite{GoWebsite} \cite{LaThesis}, enumerative results were restricted to the exhaustive generation of all $(2n)!$ permutations of $\{1,2, \ldots, 2n\}$, and counting the valid ones.

Strategies for exhaustive generation of Skolem sequences have also been quite limited. While \cite{GoWebsite} and \cite{LaThesis} used an inclusion-exclusion algorithm to enumerate Skolem sequences of order up to 23, these methods did not create the sequences, only counted them. It has been classically believed that the only way of generating all possible Skolem sequences of order $n$ is to check all $(2n)!$ permutations. 
 Since, as stated, Skolem sequences can be applied to many other areas, the actual construction of all Skolem sequences is very useful. The strategy we propose here can theoretically 
generate all Skolem sequences up to computational limitations. In our 
computing environment, we were able to actually generate all of the 
sequences up to size 7, and some of size 8. The main characteristic of this strategy is a significantly reduced search space.




\section{Skolem sequences as arc diagrams}
\label{sec:ska}

As suggested by the original problem posed by Skolem, the \emph{partitioning} of the set $\{1,2, \ldots, 2n\}$ provides a natural way to depict these sequences using arc diagrams. Vertices are labeled from $1$ to $2n$ of the $2n$ positions in the Skolem sequence and if entries $s_i=s_j$ in $w \in \mathcal{S}_n$, then an arc $(i,j)$ is drawn between vertices $i$ and $j$ in the set partition arc diagram. 
In an arc diagram we define the \emph{length} of a given arc $(i, j)$, where $|j-i|=k$, as $k$. Notice that the length of an arc in a Skolem sequence's arc diagram is $s_i=s_j=k$. 
\begin{example} The Skolem sequence $w=(3,4,2,3,2,4,1,1) \in \mathcal{S}_4$ is depicted as the following arc diagram: 

\begin{center}
\begin{tikzpicture}[scale=0.6]
\node[pnt, label=below:$1$] at (0,0){};
\node[pnt,label=below:$2$] at (1,0){};
\node[pnt, label=below:$3$] at (2,0){};
\node[pnt,label=below:$4$] at (3,0){};
\node[pnt, label=below:$5$] at (4,0){};
\node[pnt,label=below:$6$] at (5,0){};
\node[pnt, label=below:$7$] at (6,0){};
\node[pnt,label=below:$8$] at (7,0){};
\node at (0, -1.5){$3$};
\node at (1, -1.5){$4$};
\node at (2, -1.5){$2$};
\node at (3, -1.5){$3$};
\node at (4, -1.5){$2$};
\node at (5, -1.5){$4$};
\node at (6, -1.5){$1$};
\node at (7, -1.5){$1$};
\node at (-1.5, -1.5){$w=$};
\draw[bend left=45](0,0) to (3,0);
\draw[bend left=45](1,0) to (5,0);
\draw[bend left=45](2,0) to (4,0);
\draw[bend left=45](6,0) to (7,0);
\end{tikzpicture}
\end{center}

\end{example}

Once a Skolem sequence is represented this way, we can easily notice that it is not only a special case of a set partition, but also a special case of a perfect matching. In \cite{BuElMiYe2011} a generating tree strategy was employed to enumerate $k$-nonnesting set partitions using a carefully chosen label; here a similar method will be employed using \emph{open} Skolem sequences. However, instead of a label that keeps track of a nesting index, we will track arc lengths.

\begin{definition} An \textbf{open Skolem sequence} of order $n$, $\rho=(s_1, \ldots, s_n)$  is a decorated integer sequence  made up of integers $\leq n$, some possibly decorated with a $*$, such that the following conditions hold: 
\begin{enumerate}
\item If $s_i$ and $s_j$ are not decorated with a $*$, then there are exactly two elements $s_i$ and $s_j$ such that $s_i=s_j=k$, and $|j-i|=k$. In the arc diagram, an arc of length $k$ is drawn between vertices in positions $i$ and $j$. Also, such a $k$ is unique. 
\item If $s_i$  and $s_j$ are both decorated with a $*$, then $s_i\neq s_j$ for $j\neq i$. If $s_i=*k$, then in the arc diagram, vertex $i=(n+1)-k$ is an open arc that is not closed. Notice that $i$ is the minimum length that the open arc must be when completed. 
\end{enumerate}
\end{definition}
We denote the set of all open Skolem sequences as $\mathcal{OS}$ and those of order $n$ with $\mathcal{OS}_n$. Notice that for $\rho \in \mathcal{OS}_n$,  $|\rho|=n$, while for $w \in \mathcal{S}_n$,  $|w|=2n$, so the corresponding arc diagrams of a open Skolem sequence and Skolem sequence, each of order $n$ are $n$ and $2n$ respectively. 

To use the language of \cite{BuElMiYe2011}, open Skolem sequences can be represented with open arc diagrams, where the correspondence between elements in the open Skolem sequence and vertices in the diagram is as follows:

\begin{center}
\begin{tabular}{l|c|c}
\textbf{Vertex type} & \textbf{Arc Diagram} & \textbf{$s_i=$}\\
\hline
semi-opener & \begin{tikzpicture}[scale=0.7]
\node[pnt, label=below:$i$] at (0,0){};
\draw[bend left=45](0,0) to (0.5, 0.5);
\end{tikzpicture} & $*k$\\
opener & \begin{tikzpicture}[scale=0.7]
\node[pnt, label=below: $i$] at (0,0){};
\node[pnt, label=below:$j$] at (1,0){};
\draw[bend left=45](0,0) to (0.5, 0.5);
\draw[bend left=45, dotted](0.5, 0.5) to (1,0);
\end{tikzpicture} & $k$\\
closer & \begin{tikzpicture}[scale=0.7]
\node[pnt, label=below: $j$] at (0,0){};
\node[pnt, label=below:$i$] at (1,0){};
\draw[bend left=45,dotted](0,0) to (0.5, 0.5);
\draw[bend left=45](0.5, 0.5) to (1,0);
\end{tikzpicture} & $k$\\

\end{tabular}
\end{center}

\begin{example} \label{ex:iss7}Consider the open Skolem sequence $\rho=(*7, 4,1,1,*3, 4,*1)\in \mathcal{OS}_7$ and its arc diagram representation: 

\begin{center}
\begin{tikzpicture}[scale=0.8]
\node[pnt, label=below:{$i=1$}] at (0,0){};
\node[pnt,label=below:{$i=2$}] at (2,0){};
\node[pnt, label=below:{$i=3$}] at (4,0){};
\node[pnt,label=below:{$i=4$}] at (6,0){};
\node[pnt, label=below:{$i=5$}] at (8,0){};
\node[pnt,label=below:{$i=6$}] at (10,0){};
\node[pnt, label=below:{$i=7$}] at (12,0){};
\draw[bend left=45](0,0) to (0.5, 0.5);
\draw[bend left=45](2,0) to (10,0);
\draw[bend left=45](4,0) to (6,0);
\draw[bend left=45](8,0) to (8.5, 0.5);
\draw[bend left=45](12,0) to (12.5, 0.5);
\node at (0, -1.0){$s_1=*7$};
\node at (2, -1.0){$s_2=4$};
\node at (4, -1.0){$s_3=1$};
\node at (6, -1.0){$s_4=1$};
\node at (8, -1.0){$s_5=*3$};
\node at (10, -1.0){$s_6=4$};
\node at (12, -1.0){$s_7=*1$};
\node at (-2, -1.0){$\rho=$};
\node at (-2, -0.3){position:};

\end{tikzpicture}
\end{center}

\end{example}

From an open Skolem sequence of size $n$ we can construct its descendants through the addition of these two distinct vertex types to its arc diagram:

\begin{enumerate}
\item An opener (see vertices in position $1,2,3,5$ and $7$ in Example \ref{ex:iss7}) may always be added in position $n+1$.
\item A closer, (see vertices in positions $4$ and $6$ in Example \ref{ex:iss7}) may be added in position $n+1$ provided an available semi-opener exists. 
\end{enumerate}

\begin{example} \label{ex:oss7} Consider two descendants of the open Skolem sequence $w\in \mathcal{OS}_7$ seen in Example \ref{ex:iss7} that will each have order 8:
\begin{center}
\begin{tikzpicture}[scale=0.7]
\node[pnt, label=below:{$1$}] at (0,0){};
\node[pnt,label=below:{$2$}] at (1,0){};
\node[pnt, label=below:{$3$}] at (2,0){};
\node[pnt,label=below:{$4$}] at (3,0){};
\node[pnt, label=below:{$5$}] at (4,0){};
\node[pnt,label=below:{$6$}] at (5,0){};
\node[pnt, label=below:{$7$}] at (6,0){};
\node[pnt, label=below:{$8$}] at (7,0){};
\draw[bend left=45](0,0) to (7,0);
\draw[bend left=45](1,0) to (5,0);
\draw[bend left=45](2,0) to (3,0);
\draw[bend left=45](4,0) to (4.5, 0.5);
\draw[bend left=45](6,0) to (6.5, 0.5);
\node at (0, -1.0){$7$};
\node at (1, -1.0){$4$};
\node at (2, -1.0){$1$};
\node at (3, -1.0){$1$};
\node at (4, -1.0){$*4$};
\node at (5, -1.0){$4$};
\node at (6, -1.0){$*2$};
\node at (7, -1.0){$7$};

\end{tikzpicture}\hspace{1cm}
\begin{tikzpicture}[scale=0.7]
\node[pnt, label=below:{$1$}] at (0,0){};
\node[pnt,label=below:{$2$}] at (1,0){};
\node[pnt, label=below:{$3$}] at (2,0){};
\node[pnt,label=below:{$4$}] at (3,0){};
\node[pnt, label=below:{$5$}] at (4,0){};
\node[pnt,label=below:{$6$}] at (5,0){};
\node[pnt, label=below:{$7$}] at (6,0){};
\node[pnt, label=below:{$8$}] at (7,0){};
\draw[bend left=45](0,0) to (0.5,0.5);
\draw[bend left=45](1,0) to (5,0);
\draw[bend left=45](2,0) to (3,0);
\draw[bend left=45](4,0) to (7, 0);
\draw[bend left=45](6,0) to (6.5, 0.5);
\node at (0, -1.0){$*8$};
\node at (1, -1.0){$4$};
\node at (2, -1.0){$1$};
\node at (3, -1.0){$1$};
\node at (4, -1.0){$3$};
\node at (5, -1.0){$4$};
\node at (6, -1.0){$*2$};
\node at (7, -1.0){$3$};

\end{tikzpicture}
\end{center}
Notice that any descendant of this open Skolem sequence will always  have $s_2=s_6=4$ and $s_3=s_4=1$. 
\end{example}


\section{A generating tree for open Skolem sequences}\label{sec:gentree}

We present a succession rule for the construction of all open Skolem sequences. To each open Skolem sequence $\rho \in \mathcal{OS}_n$, we associate two labels: 
\begin{enumerate}
\item The $\rho$ itself; 
\item A set $S$ made up of the unstarred elements $\rho$ (without multiplicity). 
\end{enumerate}

\begin{example} \label{ex:oss7b} Take $\rho \in \mathcal{OS}_7$ as seen in Example \ref{ex:oss7}. Then $\rho$ would have labels $(*7, 4,1,1,*3, 4,*1)$  and $S=\{1,4\}$. 
\end{example}
Skolem sequences may be recognized as the subset of open Skolem sequences of length $2n$ where $S= \{1,2, \ldots, n\}$. This may be tested using cardinalities and maximal elements: 
\begin{proposition} \label{prop:1} A Skolem sequence represented with labels of $\rho \in \mathcal{OS}_n $ and set $S$ satisfies the following: 
\begin{enumerate}
\item $2|S|=|n|$;
\item  $\max(S)= |S|$. 
\end{enumerate}
\end{proposition}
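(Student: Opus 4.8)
The plan is to prove this by simply unwinding the definitions, the only subtlety being that two different notions of ``order'' are in play: the order $m$ of a Skolem sequence as in Definition~\ref{def:skolem}, and the order $n$ of the open Skolem sequence $\rho\in\mathcal{OS}_n$ that represents it. The key observation is that a genuine Skolem sequence is \emph{star-free}: every value occurs twice and is matched by a closed arc, with no semi-openers. Hence a Skolem sequence of order $m$, viewed inside $\mathcal{OS}$, is an element of $\mathcal{OS}_{2m}$ (since for open Skolem sequences length equals order, and a Skolem sequence has length $2m$), and its associated set $S$ of unstarred values without multiplicity is just the underlying set of the multiset $\{1,1,2,2,\dots,m,m\}$.

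Concretely, I would fix a Skolem sequence $w\in\mathcal{S}_m$. First, record that as an open Skolem sequence $w$ has $|w|=2m$ entries, all unstarred, so $\rho=w$ lies in $\mathcal{OS}_n$ with $n=|w|=2m$. Next, by condition~(1) of Definition~\ref{def:skolem}, the distinct values occurring in $w$ are exactly $1,2,\dots,m$; since none are decorated with a $*$, this gives $S=\{1,2,\dots,m\}$. Both conclusions are then immediate: $|S|=m$, so $2|S|=2m=n$, which is~(1); and $\max(S)=m=|S|$, which is~(2).

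There is essentially no obstacle here beyond bookkeeping — the two points to be careful about are not conflating $m$ with $n=2m$, and verifying that a genuine Skolem sequence really does yield a star-free $\rho$ (otherwise $S$ would omit the values carried by open arcs). The content of the surrounding remark ``this may be tested using cardinalities and maximal elements'' lies in the converse, which I would include for completeness: if $\rho\in\mathcal{OS}_n$ has $a$ semi-openers and $b$ closed arcs then $n=a+2b$, and by the uniqueness clause in the definition of open Skolem sequences the closed arcs have pairwise distinct lengths, so $|S|=b$; thus $2|S|=n$ forces $a=0$, i.e.\ $\rho$ is star-free, and then $\max(S)=|S|=b$ forces the $b$ distinct positive arc lengths to be exactly $1,2,\dots,b$, covering all $2b=n$ positions — precisely a Skolem sequence of order $b$. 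So conditions~(1)--(2) in fact characterize the Skolem sequences inside $\mathcal{OS}$, and not merely hold for them.
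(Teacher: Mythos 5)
Your proof is correct, and for the two stated claims it follows essentially the same route as the paper: observe that a genuine Skolem sequence is star-free, so its label set $S$ is exactly $\{1,2,\dots,m\}$, from which both the cardinality identity and $\max(S)=|S|$ are immediate. You are more careful than the paper about the clash of notation (the Skolem order $m$ versus the order $n=2m$ of the representing open Skolem sequence, which the paper's statement ``$2|S|=|n|$'' blurs), and that care is worthwhile. The genuine added value of your write-up is the converse: counting $n=a+2b$ with $a$ semi-openers and $b$ closed arcs of pairwise distinct lengths, so that $2|S|=n$ forces $a=0$ and $\max(S)=|S|$ forces $S=\{1,\dots,b\}$, hence $\rho$ is a Skolem sequence of order $b$. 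The paper never proves this direction --- its proof is only ``Skolem $\Rightarrow$ labels satisfy (1)--(2)'' --- even though the converse is precisely what is used when the proposition serves as a \emph{test} to recognize Skolem sequences among the leaves of the generating tree. So your proposal not only matches the paper's argument but closes a gap the paper leaves implicit.
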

\begin{proof}
A Skolem sequence is made up of the elements of the multi-set $\{1,1, 2,2 \ldots, n,n\}$. When $k$ is present twice in $\rho \in \mathcal{OS}_n$, no $*$ decorates it, and $k$ is in the set $S$, thus Claim 1 is shown. Since each element of a Skolem sequence is present twice, the corresponding label set is $S=\{1,2, \ldots n\}$ whose cardinality is $n=\max(S)$, thus Claim 2. 
\end{proof}

We use these labels to generate all open Skolem sequences, and the above proposition to identify Skolem sequences. 

We use open Skolem sequences of order $n$ to build open Skolem sequences of order $n+1$. There are two ways of adding the next element: 
\begin{enumerate}
\item Adding an element with a $*$ decorating it at the end. 
\item Adding a copy of one of the $*$ elements to alter the $*$ element to a non-decorated state, provided that its value is not already present in the $S$ label set: 
\end{enumerate}
In the language of \cite{BuElMiYe2011}, (1) corresponds to the addition of an opener and (2) to the addition of a closer. 

\begin{example} In Examples \ref{ex:oss7} and \ref{ex:iss7} we saw an open Skolem sequence of order $7$ and two of its children. In Example \ref{ex:oss7b}, we established that the labels of the given $\rho$ would be: $(*7, 4,1,1,*3, 4,*1)$ and $\{1,4\}$. The labels of the first descendant seen in Example \ref{ex:iss7} is $(7,4,1,1,*4, 4, *2, 7)$ and $\{1,4,7\}$, while the labels of the second are $(*8, 4,1,1,3,4,*2, 3)$ and $\{1,3,4\}$.  
\end{example}

We describe the succession rule for constructing open Skolem sequences of order $n+1$ from an open Skolem sequence of order $n$:
\begin{enumerate}
\item \textbf{To add an opener:}  To each $*k\in \rho \in \mathcal{OS}_n \rightarrow *(k+1)$, and $*1$ is appended to $\rho$ to create a $\mu \in \mathcal{OS}_{n+1}$. 
\item  \textbf{To add a closer:} For each $*j \in \rho \in \mathcal{OS}_n$, check if $j \in S$. If yes, \textbf{stop}. If no:
\begin{itemize}
\item  For all $*k \neq *j$, $*k \rightarrow *(k+1)$, 
\item $*j \rightarrow j$,
\item  $j$ is appended to $\rho \in \mathcal{OS}_n$ to create a $\mu \in \mathcal{OS}_{n+1}$, and
\item $S\rightarrow  S \cup \{j\}$. 
\end{itemize}
\end{enumerate}

 We depict the start of the generating tree in Figure \ref{fig:onetree}.

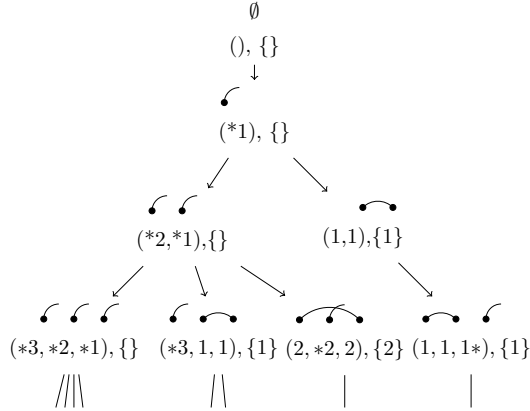
\begin{figure}[h!]  
 \centering
\begin{minipage}[b]{0.47\linewidth}
\scalebox{0.8}{
\begin{tikzpicture}[scale=0.6]
\node(1a) at (0,0){$\emptyset$};
\node(1label) at (0,-1){(), \{\}};
\node(2a) at (0, -3){\begin{tikzpicture}[scale=0.5]
\node[pnt] at (0,0){};
\draw[bend left=45](0,0) to (0.5, 0.5);
\node at (1,-1){(*1), \{\}};
\end{tikzpicture}};
\node(3a) at (-2, -6){\begin{tikzpicture}[scale=0.5]
\node[pnt] at (0,0){};
\node[pnt] at (1,0){};
\draw[bend left=45](0,0) to (0.5, 0.5);
\draw[bend left=45](1,0) to (1.5, 0.5);
\node at (1, -1){(*2,*1),\{\}};\end{tikzpicture}
};
\node(3b) at (3, -6){\begin{tikzpicture}[scale=0.5]
\node[pnt] at (0,0){};
\node[pnt] at (1,0){};
\draw[bend left=45](0,0) to (1,0);
\node at (0, -1){(1,1),\{1\}};\end{tikzpicture}
};
\node(4a) at (-5, -9){\begin{tikzpicture}[scale=0.5]
\node[pnt] at (0,0){};
\node[pnt] at (1,0){};
\node[pnt] at (2,0){};
\draw[bend left=45](0,0) to (0.5, 0.5);
\draw[bend left=45](1,0) to (1.5, 0.5);
\draw[bend left=45](2,0) to (2.5, 0.5);
\node at (1, -1){$(*3,*2,*1), \{\}$};\end{tikzpicture}
};
\node(4b) at (-1, -9){\begin{tikzpicture}[scale=0.5]
\node[pnt] at (0,0){};
\node[pnt] at (1,0){};
\node[pnt] at (2,0){};
\draw[bend left=45](0,0) to (0.5,0.5);
\draw[bend left=45](1,0) to (2, 0);
\node at (1.5, -1){$(*3,1,1), \{1\}$};\end{tikzpicture}
};
\node(4c) at (2.5, -9){\begin{tikzpicture}[scale=0.5]
\node[pnt] at (0,0){};
\node[pnt] at (1,0){};
\node[pnt] at (2,0){};
\draw[bend left=45](0,0) to (2,0);
\draw[bend left=45](1,0) to (1.5, 0.5);
\node at (1.5, -1){$(2, *2, 2), \{2\}$}; \end{tikzpicture}
};
\node(4d) at (6, -9){\begin{tikzpicture}[scale=0.5]
\node[pnt] at (0,0){};
\node[pnt] at (1,0){};
\node[pnt] at (2,0){};
\draw[bend left=45](0,0) to (1,0);
\draw[bend left=45](2,0) to (2.5, 0.5);
\node at (1.5, -1){$(1,1, 1*), \{1\}$};
\end{tikzpicture}
};

\draw[->](1label) to (2a);
\draw[->](2a) to (3a);
\draw[->](2a) to (3b);
\draw[->](3a) to (4a);
\draw[->](3a) to (4b);
\draw[->](3a) to (4c);
\draw[->](3b) to (4d);
\draw(4a) to (-5.5, -11);
\draw(4a) to (-5.25, -11);
\draw(4a) to (-5, -11);
\draw(4a) to (-4.75, -11);
\draw(4b) to (-0.8, -11);
\draw(4b) to (-1.2,-11);
\draw(4c) to (2.5, -11);
\draw(4d) to (6, -11); 
\end{tikzpicture}}
\end{minipage}

  \caption{The start of the generating tree.}
   \label{fig:onetree}
\end{figure}

\begin{example} We consider the open Skolem sequences that arise from adding a vertex to $\rho=(5*, 4*, 1,1,1*)\in \mathcal{OS}_5$ in Figure \ref{fig:32}.

\begin{figure}[h!]
\centering
\begin{minipage}[b]{5.2cm}
\scalebox{0.9}{
\begin{tikzpicture}[scale=0.8]
\node[pnt, label=below:1] at (0,0){};
\node[pnt,label=below:2] at (1,0){};
\node[pnt, label=below:3] at (2,0){};
\node[pnt,label=below:4] at (3,0){};
\node[pnt, label=below:5] at (4,0){};
\draw[bend left=45](0,0) to (0.5, 0.5);
\draw[bend left=45](1,0) to (1.5, 0.5);
\draw[bend left=45](2,0) to (3,0);
\draw[bend left=45](4,0) to (4.5, 0.5);

\node at (2, -1){$(*5, *4,1,1,*1), \{1\}$};
\end{tikzpicture}}
\end{minipage}
\begin{minipage}[b]{1cm}
\huge $\rightarrow$

\vspace{1.5cm}
\end{minipage}
\begin{minipage}[b]{5.5cm}
\scalebox{0.9}{
\begin{tikzpicture}[scale=0.8]
\node[pnt, label=below:1] at (0,0){};
\node[pnt,label=below:2] at (1,0){};
\node[pnt, label=below:3] at (2,0){};
\node[pnt,label=below:4] at (3,0){};
\node[pnt, label=below:5] at (4,0){};
\node[pnt, label=below:6] at (5,0){};
\draw[bend left=45](0,0) to (0.5, 0.5);
\draw[bend left=45](1,0) to (1.5, 0.5);
\draw[bend left=45](2,0) to (3,0);
\draw[bend left=45](4,0) to (4.5, 0.5);
\draw[bend left=45](5,0) to (5.5, 0.5);

\node at (2.5, -1){$(*6, *5, 1,1,*2, *1), \{1\}$};
\end{tikzpicture}}
\end{minipage}

\begin{minipage}[b]{5.5cm}
\scalebox{0.9}{
\begin{tikzpicture}[scale=0.8]
\node[pnt, label=below:1] at (0,0){};
\node[pnt,label=below:2] at (1,0){};
\node[pnt, label=below:3] at (2,0){};
\node[pnt,label=below:4] at (3,0){};
\node[pnt, label=below:5] at (4,0){};
\node[pnt, label=below:6] at (5,0){};
\draw[bend left=45](0,0) to (0.5, 0.5);
\draw[bend left=45](1,0) to (5, 0);
\draw[bend left=45](2,0) to (3,0);
\draw[bend left=45](4,0) to (4.5, 0.5);
\node at (2.5, -1){$(*6,4,1,1,*2, 4),\{1,4\}$};

\end{tikzpicture}}
\end{minipage} \hspace{0.5cm}
\begin{minipage}[b]{5.5cm}
\scalebox{0.9}{
\begin{tikzpicture}[scale=0.8]
\node[pnt, label=below:1] at (0,0){};
\node[pnt,label=below:2] at (1,0){};
\node[pnt, label=below:3] at (2,0){};
\node[pnt,label=below:4] at (3,0){};
\node[pnt, label=below:5] at (4,0){};
\node[pnt, label=below:6] at (5,0){};
\draw[bend left=45](0,0) to (5, 0);
\draw[bend left=45](1,0) to (1.5, 0.5);
\draw[bend left=45](2,0) to (3,0);
\draw[bend left=45](4,0) to (4.5, 0.5);
\node at (2.5, -1){$(5, *5, 1,1,*2,5),\{1,5\}$};

\end{tikzpicture}}
\end{minipage}
\caption{$w=(5*,4*,1,1,1*) \in \mathcal{OS}_5$ and its descendants.}
   \label{fig:32}
\end{figure}
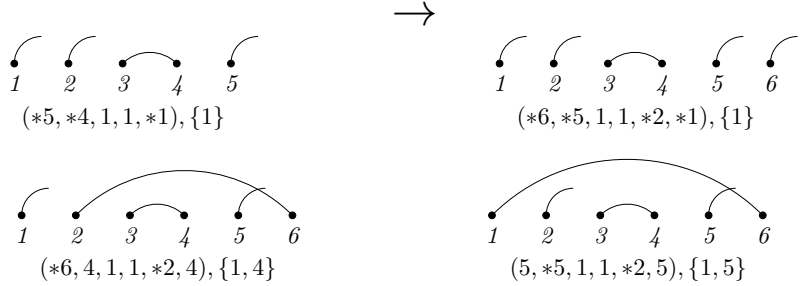
\end{example}



\section{Discussion}\label{sec:dis}

\noindent \textbf{Programming:} The succession rule has been successfully programmed, see \cite{BuWebsite} or Appendix 1, in Maple 16 using a 2 Intel Xeon. The generating tree produces open Skolem sequences which are then searched for Skolem sequences, using Proposition \ref{prop:1}. The total number of open Skolem sequences is:

\small

 $$1,2,4, 8, 20, 52, 146 , 430, 1306,4176,13832,47452 , 169044 , 619672, 234225$$

\normalsize

Note that the sequence made up of $|\mathcal{OS}_n|$ is not found in the Online Encycolpedia of Integer Sequences. Enumerating the number of open Skolem sequences requires the use of more memory than is given with our large computing power, which runs out after over 30GBs are used, and time is less than 10 minutes. Due to the nature of our labelling system insufficient memory to compute $\mathcal{OS}_{16}$ is unsurprising, if disappointing. That said, while we have not pushed the enumeration of Skolem sequences further, we have dramatically reduced the potential search space for future endeavors. For example, historically when exhaustively searching for all Skolem sequences of order $n=5$, all $(2\cdot 5)!=3,628,800$ permutations were considered. Using our method, only 4176 open Skolem sequences must be searched to find Skolem sequences of order $n=5$. We anticipate that there are less than 10 million open Skolem sequences to search in order to find the next exhaustive set of all 504 Skolem sequences of order 8, while classically there would be over $20 \times 10^{12}$ permutations to consider. 

Beyond this purely enumerative benefit of our method, it should be noted that through our generating tree, Skolem sequences are also exhaustively constructed, a definite goal in the field of design theory. Furthermore, this method lends itself to reasonable modifications that the authors are confident will lead to exhaustive generation and construction of other Skolem-type sequences. 






Indeed there are a variety of methods for extending Skolem sequences, including adding in \emph{hooks}, or 0's, in the sequence, and allowing more than one pair of entries to have value $k$. In the latter, this could include having entries with the same value $k$ all be $k$ units apart, or different pairs with the same value could be independent of each other, see \cite{ChThesis}.  In each case, with some careful bookkeeping, we are confident the succession rules may be carefully manipulated to include these extensions. 

While the enumeration of Skolem sequences, and various generalizations of Skolem sequences has been considered to varying degrees in various algorithms (\cite{ChThesis}, \cite{LaThesis}, \cite{GoWebsite}), the strategy presented here is a departure from them in two different ways. Firstly, Skolem sequences of size $n+1$ are built from open Skolem sequences of size $n$, and all may exhaustively be generated in this manner. Secondly, and perhaps more importantly, this method in which the sequence is represented as an arc diagram and then a label is used in order to produce a succession rule for a generating tree speaks to a  larger picture in which arc diagrams provide a unifying theory. By encoding Skolem sequences in arc diagrams, we may use this framework for construction and enumeration. This framework highlights the potential for many other combinatorial classes to be studied in this manner.

\textbf{Remark:} In \cite{ChDeDuStYa2007}, matchings are encoded in oscillating tableaux with integer fillings in order to prove equidistribution between crossing and nesting statistics. A Skolem sequence, when viewed as a subset of matchings, corresponds to those oscillating tableaux in which each integer filling appears $k$ times for every $k \in \{1,2, \ldots n\}$. 

\begin{example} We have seen the Skolem sequence $w=(3,4,2,3,2,4,1,1) \in \mathcal{S}_4$ drawn as an arc diagram: 

\begin{center}
\begin{tikzpicture}[scale=0.6]
\node[pnt, label=below:$1$] at (0,0){};
\node[pnt,label=below:$2$] at (1,0){};
\node[pnt, label=below:$3$] at (2,0){};
\node[pnt,label=below:$4$] at (3,0){};
\node[pnt, label=below:$5$] at (4,0){};
\node[pnt,label=below:$6$] at (5,0){};
\node[pnt, label=below:$7$] at (6,0){};
\node[pnt,label=below:$8$] at (7,0){};
\draw[bend left=45](0,0) to (3,0);
\draw[bend left=45](1,0) to (5,0);
\draw[bend left=45](2,0) to (4,0);
\draw[bend left=45](6,0) to (7,0);
\node at (-1.5, -1.2){$w=$};
\node at (0,-1.2){$3$};
\node at (1,-1.2){$4$};
\node at (2,-1.2){$2$};
\node at (3,-1.2){$3$};
\node at (4,-1.2){$2$};
\node at (5,-1.2){$4$};
\node at (6,-1.2){$1$};
\node at (7,-1.2){$1$};

\end{tikzpicture}
\end{center}
Its filled oscillating tableaux is as follows: 

\begin{center}

\begin{tikzpicture}
\node at (0,0){$\emptyset$};
\node[rectangle, draw] at (1,0){1};
\node[rectangle, draw] at (2,0){1};
\node[rectangle, draw] at (2.4, 0){2};
\node[rectangle, draw] at (3.4, 0){1};
\node[rectangle, draw] at (3.8, 0){2};
\node[rectangle, draw] at (3.4, -0.47){3};
\node[rectangle, draw] at (4.8, 0){2};
\node[rectangle, draw] at (5.22, 0){3};
\node[rectangle, draw] at (6.2, 0){2};
\node[rectangle] at (7.2, 0){$\emptyset$};
\node[rectangle, draw] at (8.2, 0){7};
\node[rectangle] at (9.2, 0){$\emptyset$};
\end{tikzpicture}
\end{center}
We see that the 7 appears once, the 3 appears twice, the 1 appears three times, and the 2 four times. The presence of $n$ different integers, appearing each of the $\{1, \ldots, n\}$ times is unsurprising for a Skolem sequence. This is because an arc of length $k$ enters the tableaux at the presence of its closer, (when read right to left) and does not leave until its opener has been reached. 
\end{example}

 Future work includes the construction of generalized Skolem-type sequences using this method and the translation of the succession rule to a functional equation for faster enumeration.

\section*{Acknowledgements}
\label{sec:ack}
Thanks to Brett Stevens and Marni Mishna for directing us to this interesting problem. We are also very grateful to Mogens Lemvig Hansen for his invaluable help in Maple and Latex.


\bibliographystyle{plain}



\label{sec:biblio}

\appendix
\section{Maple code}

\begin{verbatim}
###  Preamble  ###########
`type/openSkolemlabel` := [ list(integer), set(posint) ];
incrementstar := proc(n::integer)
   if n >= 0 then 
      n;
   else
      n - 1;
   fi;
end;

addopener := proc(L::openSkolemlabel)
   [ [ op(map(incrementstar, L[1])), -1], L[2] ];
end;
addcloser := proc(L::openSkolemlabel)
   seq( addcl(L, i), i=1..nops(L[1]) );
end;
addcl := proc(L::openSkolemlabel, i::posint)
   local K;
   if L[1][i] >= 0 or member(-L[1][i], L[2]) then 
      return NULL;
   fi;
   K := L[1];
   K[i] := -K[i];
   [ [ op(map(incrementstar, K)), -L[1][i] ],
     L[2] union { -L[1][i] } ];
end;
Skolem := proc(L::openSkolemlabel)
   local n;
   n := nops(L[1])/2;
   type(n, integer)
      and
      L[2] = {$1..n};
end;
### end preamble ###############
L := [ [], {} ];
to 20 do
   L := op(map(addopener, [L])), op(map(addcloser, [L]));
   K := select(Skolem, [L]);
   n := nops(K);
   if n > 0 then
      print( n, op(K) );
   fi;
od:

\end{verbatim}
\end{document}